\newtheorem{theorem}{Theorem}[section]
\newtheorem{proposition}[theorem]{Proposition}
\newtheorem{definition}[theorem]{Definition}
\newcommand \p {\partial}
\newcommand \R {\mathbb{R}}
\renewcommand \L {\mathrm{L}}
\newcommand \LL {\mathbf{L}}
\newcommand \HH {\mathbf{H}}
\newcommand \I {\mathrm{I}}
\newcommand \Id {\mathrm{Id}}
\renewcommand \d {\mathrm{d}}
\DeclareMathOperator{\divg}{div}
\gdef\SetFigFont#1#2#3#4#5{%
  \reset@font\fontsize{#1}{#2pt}%
  \fontfamily{#3}\fontseries{#4}\fontshape{#5}%
  \selectfont}%
\title{Approximate controllability of a 2D linear system related to the motion of two fluids with surface tension}
\author[1, 2]{S\'ebastien Court}
\affil[1]{\begin{small}Department of Mathematics, University of Innsbruck, Technikerstrasse 13, 6020 Innsbruck, Austria.\end{small}}
\affil[2]{\begin{small}Digital Science Center, University of Innsbruck, Innrain 15, 6020 Innsbruck, Austria. Email: {\tt sebastien.court@uibk.ac.at}\end{small}}
\begin{document}

\maketitle

\begin{abstract}
We consider a coupled system of partial differential equations describing the interactions between a closed free interface and two viscous incompressible fluids. The fluids are assumed to satisfy the incompressible Navier-Stokes equations in time-dependent domains that are determined by the free interface. The mean curvature of the interface induces a surface tension force that creates a jump of the Cauchy stress tensor on both sides. It influences the behavior of the surrounding fluids, and therefore the deformation of this interface via the equality of velocities. In dimension 2, the steady states correspond to immobile interfaces that are circles with all the same volume. Considering small displacements of steady states, we are lead to consider a linearized version of this system. We prove that the latter is approximately controllable to a given steady state for any time $T>0$ by the means of additional surface tension type forces, provided that the radius of the circle of reference does not coincide with a scaled zero of the Bessel function of first kind.
\end{abstract}

\noindent{\bf Keywords:} Navier-Stokes equations, Free boundary, Surface tension, Approximate Controllability.\\
\hfill \\
\noindent{\bf AMS subject classifications (2020): 93B05, 76D55, 76D45, 35R35, 76D05, 34B30.} 


\section{Introduction}
Topics related to the study of the motion and shape of soap bubbles is getting more and more attention, as they have applications in the design of medical treatments and food products for example. Their motion is realized via surface tension, by interaction with surrounding fluids. The question of their stability is a major issue, and the Kolmogorov theory predicts that a soap bubble does not burst as long as the ratio between the inertia forces and the surface tension forces does not reach a power of a critical diameter, quantified by Hinze~\cite{Hinze1955}. At intermediate Reynolds number, this question is delicate as the inertia forces play a non-negligible role, leading for example to bouncing effects~\cite[Fig.~11]{Court_JCAM2019}. In order to circumvent this limitation on the diameter, one can also try to design additional surface tension forces that control and guarantee the stability of the soap bubble. In practice the realization of this type of controls can be realized in different ways, like for example surfactants~\cite{Ferri2000, Rana2012}, or electrochemical effects~\cite{Giant2014, Eaker2016}. In the present paper, we propose sufficient conditions on the diameter of a 2-dimensional soap bubble for being approximately controllable to a circle, without bound constraint on this diameter.\\

Given a fixed bounded domain~$\Omega \subset\R^2$, a regular closed curve describing an interface~$\Gamma(t)$ at time~$t$ can be described with a smooth deformation $X(\cdot,t)$ from a reference configuration~$\Gamma$, such that $\Gamma(t) = X(\Gamma,t)$. It splits~$\Omega$ into two connected parts~$\Omega^+(t)$ and~$\Omega^-(t)$, as described in Figure~\ref{fig-desc}. The reference interface~$\Gamma$ is chosen to be a circle of radius~$r_s>0$ such that $\Gamma \cap \p \Omega = \emptyset$.

\vspace*{-1.5cm}
\begin{minipage}{\linewidth}
\vspace*{1cm}
\begin{center}
\scalebox{0.55}{
\begin{picture}(0,0)
\includegraphics{./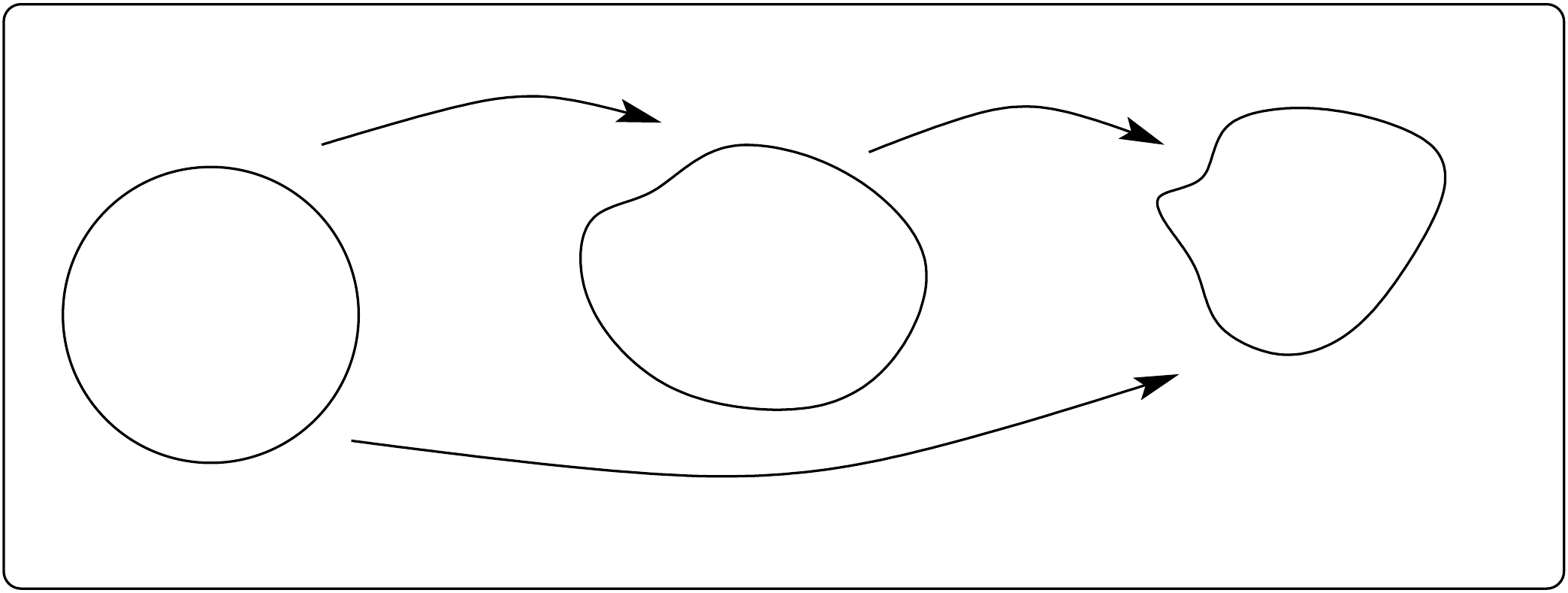}
\end{picture}
\setlength{\unitlength}{4144sp}
\begin{picture}(10887,4564)(1775,-5187)
\put(2800,-3400){\makebox(0,0)[lb]{\smash{{\SetFigFont{20}{16.4}{\rmdefault}{\mddefault}{\updefault}{\color[rgb]{0,0,0}$\Omega^-$}%
}}}}
\put(2000,-2700){\makebox(0,0)[lb]{\smash{{\SetFigFont{20}{16.4}{\rmdefault}{\mddefault}{\updefault}{\color[rgb]{0,0,0}$\Omega^+$}%
}}}}
\put(7336,-3616){\makebox(0,0)[lb]{\smash{{\SetFigFont{20}{16.4}{\rmdefault}{\mddefault}{\updefault}{\color[rgb]{0,0,0}$X_0(\Gamma)$}%
}}}}
\put(6000,-5000){\makebox(0,0)[lb]{\smash{{\SetFigFont{20}{16.4}{\rmdefault}{\mddefault}{\updefault}{\color[rgb]{0,0,0}$X(\cdot,t)$}%
}}}}
\put(4300,-2000){\makebox(0,0)[lb]{\smash{{\SetFigFont{20}{16.4}{\rmdefault}{\mddefault}{\updefault}{\color[rgb]{0,0,0}$X_0$}%
}}}}
\put(7000,-2000){\makebox(0,0)[lb]{\smash{{\SetFigFont{20}{16.4}{\rmdefault}{\mddefault}{\updefault}{\color[rgb]{0,0,0}$X(\cdot,t) \circ X_0^{-1}$}%
}}}}
\put(4000,-3700){\makebox(0,0)[lb]{\smash{{\SetFigFont{20}{16.4}{\rmdefault}{\mddefault}{\updefault}{\color[rgb]{0,0,0}$\Gamma$}%
}}}}
\put(10200,-3500){\makebox(0,0)[lb]{\smash{{\SetFigFont{20}{16.4}{\rmdefault}{\mddefault}{\updefault}{\color[rgb]{0,0,0}$\Gamma(t)$}%
}}}}
\put(9200,-3000){\makebox(0,0)[lb]{\smash{{\SetFigFont{20}{16.4}{\rmdefault}{\mddefault}{\updefault}{\color[rgb]{0,0,0}$\Omega^-(t)$}%
}}}}
\put(10200,-2200){\makebox(0,0)[lb]{\smash{{\SetFigFont{20}{16.4}{\rmdefault}{\mddefault}{\updefault}{\color[rgb]{0,0,0}$\Omega^+(t)$}%
}}}}
\put(11500,-1600){\makebox(0,0)[lb]{\smash{{\SetFigFont{24}{16.4}{\rmdefault}{\mddefault}{\updefault}{\color[rgb]{0,0,0}$\Omega$}%
}}}}
\end{picture}
}
\end{center}
\vspace*{-0.5cm}
\begin{figure}[H]
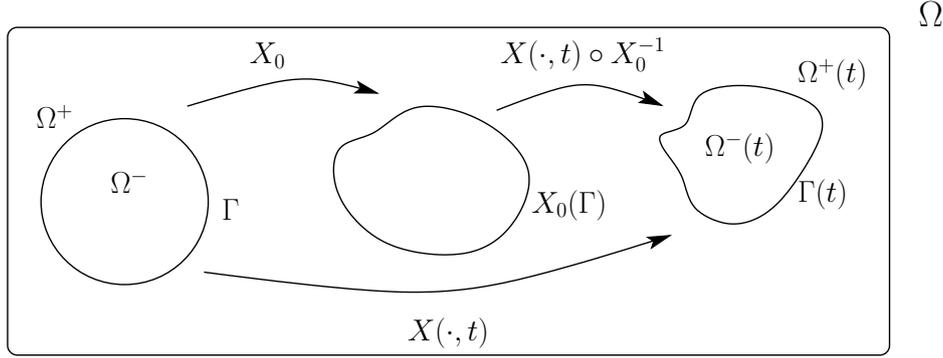

\caption{Description of the geometric configuration, and related notation. \label{fig-desc}}
\end{figure}
\end{minipage}
\FloatBarrier
\hfill \\

We will use the notation of type~$\Omega^\pm(t)$ when refering to~$\Omega^-(t)$ and~$\Omega^-(t)$ simultaneously and respectively. The initial system we are interested in couples the incompressible Navier-Stokes equations with the free closed interface~$\Gamma(t)$, as follows:

\begin{subequations} \label{sys-origin}
\begin{eqnarray}
\rho \left(\frac{\p u}{\p t} + (u\cdot\nabla)u\right)
- \divg \sigma(u,p) = \rho f \quad \text{ and } \quad \divg u = 0 & & \text{in } \Omega^\pm(t), \ t \in (0,T), \label{eq-main-origin1}\\
u^+ = 0 & &  \text{on } \p \Omega \times (0,T), \\
u^\pm = \frac{\p X}{\p t}\left( X(\cdot,t)^{-1},t\right) 
\quad \text{ and } \quad
-\left[ \sigma(u,p)\right]n = \mu \kappa n + g & & \text{on } \Gamma(t), \ t \in (0,T), \label{eq-main-origin3}\\
X(\cdot,0) = X_0 \text{ on } \Gamma,\quad
u(\cdot,0) = u_0 \text{ in } \Omega^\pm(0). & & 
\end{eqnarray}
\end{subequations}
The interface~$\Gamma(t)$, defined by mapping~$X(\cdot,t)$, splits the domain~$\Omega$ into two fluid domains~$\Omega^+(t)$ and~$\Omega^-(t)$, in which the Navier-Stokes equations are considered, with velocity/pressure couples~$(u,p)$ as unknowns. The equality of velocities on~$\Gamma(t)$ (first equation of~\eqref{eq-main-origin3}) corresponds to the Eulerian-Lagrangian correspondence. The outword normal vector of~$\Gamma(t)$ is denoted by~$n$, and the mean curvature~$\kappa$ of~$\Gamma(t)$ induces a jump of the normal trace of the Cauchy stress tensor~$\sigma(u,p) = 2\nu\varepsilon(u) - p\I = \nu(\nabla u +\nabla u^T) -p \I$ (second equation of~\eqref{eq-main-origin3}), where~$\mu >0$ is a constant surface tension coefficient. This jump expresses as $\left[\sigma(u,p) \right] = (2\nu^+\varepsilon(u^+) -p^+\I) - (2\nu^-\varepsilon(u^-) -p^-\I)$. The viscosity~$\nu$ is assumed to be constant inside~$\Omega^\pm(t)$, equal to $\nu^\pm$, as well as the density~$\rho$ equal to~$\rho^\pm$. \\
This type of models has been studied by Denisova and Solonnikov~\cite{Denisova1994, Denisova1995}, based on earlier contributions~\cite{Denisova1989, Denisova1990, Solonnikov1991, Denisova1991, Denisova1993}, after initial investigations of Rivkind~\cite{Rivkind1976, Rivkind1977, Rivkind1979}. Recently the wellposedness questions were revisited in the context of the $L^p$-maximal regularity~\cite{Simonett2009, Simonett2010, Simonett2011}. Further investigations in this direction are given in~\cite{Denisova2021}. From a control perspective, very few mathematical contributions exist, when dealing with jump conditions of type~\eqref{eq-main-origin3}, except~\cite{Court2022} which addresses the low-Reynolds number case (namely~$\rho=0$). \\
Linearized around some steady state $(u_s^\pm,\nabla p_s^\pm,X_s) = (0,0, X_s)$, such that $X_s(\Gamma)$ is also a circle of radius~$r_s$ (because of incompressibility) system~\eqref{sys-origin} becomes the following one:
\begin{subequations} \label{sysmain}
\begin{eqnarray}
\displaystyle \rho^\pm\frac{\p u^\pm}{\p t}
-\divg \sigma^\pm(u^\pm, p^\pm) = \rho^\pm f^\pm \quad \text{ and } \quad 
\divg u^\pm = 0 & & \text{in } \Omega^\pm \times (0,T),  \\
u^+ = 0 & &\text{on } \p \Omega \times (0,T),  \\
u^+ = u^- = \frac{\p Z}{\p t}
\quad \text{ and } \quad 
-\left[\sigma(u,p) \right]n = \mu \Delta_{\Gamma} Z + g & &
\text{on } \Gamma \times (0,T), \\
Z(\cdot,0 ) = Z_0  \text{ on } \Gamma, \quad
u^\pm(\cdot,0) = u_0^\pm \text{ in } \Omega^\pm. & & 
\end{eqnarray}
\end{subequations}
The unknowns are then $(u^\pm,p^\pm,Z)$, where~$Z = X-X_s$ plays the role of a displacement. The forces~$f^\pm$ are given, and $g$ is our control function. At low Reynolds number ($\rho^\pm=0$), an operator formulation was given in~\cite{Court2022} for the corresponding linear system, and feedback laws were derived, based on approximate controllability, which is quite straightforward to prove in this case. Approximate controllability is obtained via a unique continuation argument. In the present paper, we prove approximate controllability of the interface displacement~$Z$ -- up to translations -- when~$\rho^\pm$ are not equal to zero, leading to a less trivial unique continuation result, as in particular it involves zeros of Bessel functions. 
We detail the paper as follows: In section~\ref{sec-steady} we describe the steady states of system~\eqref{sys-origin}. Next section~\eqref{sec-lin} explains how we obtain the linearized system~\eqref{sysmain}. Section~\ref{sec-cont} is dedicated to the approximate controllability of system~\eqref{sysmain}.

\section{On the steady states} \label{sec-steady}

Consider a reference configuration given by a circle~$\Gamma$ of radius $r_s>0$, splitting the domain~$\Omega$ into~$\Omega^+$ and~$\Omega^-$, such that $\Gamma \cap \p \Omega = \emptyset$, we define the following class of displacements:
\begin{equation*}
\mathcal{C} := \left\{
Z_s\in \HH^{1/2}(\Gamma) \text{ such that } (\Id+Z_s)(\Gamma) \text{ is a circle of radius $r_s$ and }
(\Id+Z_s)(\Gamma)\cap \p \Omega = \emptyset
\right\}.
\end{equation*}
Note that the set~$\mathcal{C}$ contains rotations and certain translations, as well as diffeomorphisms of the circle~$\Gamma$. Let us show that the set~$\mathcal{C}$ corresponds to steady deformations~$\Id+Z_s$. The steady states~$(u_s^+,p_s^+, u_s^-,p_s^-, Z_s)$ define $\Gamma_s := (\Id+Z_s)(\Gamma)$ with curvature denoted by~$\kappa_s$,which  splits the domain~$\Omega$ into $\Omega^+_s$ and~$\Omega_s^-$, and satisfy the following system
\begin{subequations} \label{sys-steady}
\begin{eqnarray}
\rho^\pm(u_s^\pm \cdot \nabla ) u_s^\pm - \divg \sigma^\pm(u_s^\pm,p_s^\pm) = 0 
\quad \text{and} \quad
\divg u_s^\pm = 0
& & \text{in } \Omega_s^\pm, \label{sys-steady1}\\
u_s^+ = 0 & & \text{on } \p \Omega, \\
u_s^+ = u_s^- = 0 \quad \text{and} \quad 
-\left[\sigma(u_s,p_s)\right]n_s =  \mu \kappa_sn_s & &
\text{on } \Gamma_s. \label{eq-steady3}
\end{eqnarray}
\end{subequations}
We prove that locally the steady states are trivial states, where the steady displacements~$Z_s$ lie in~$\mathcal{C}$.

\begin{proposition}
There exists a constant $C_s>0$ depending only on $\nu^\pm$, $\rho^\pm$ and $\Omega$, such that the only velocities solution of system~\eqref{sys-steady} in the set $\{ (u_s^+,u_s^-) \in \mathbf{H}^1(\Omega_s^+)\times \HH^1(\Omega_s^-)  | \ \|u^\pm_s\|_{\mathbf{L}^2(\Omega)} \leq C_s \}$ is $u^\pm_s = 0$. This corresponds to a configuration where $\Gamma_s$ is a circle, and the pressures~$p_s^\pm$ are constant on both sides of $\Gamma_s$.
\end{proposition}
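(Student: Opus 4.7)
The strategy is a standard energy identity in each phase, exploiting that every boundary piece of $\Omega_s^\pm$ carries a homogeneous Dirichlet datum, followed by a Young--Laplace style argument for the geometry.

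\emph{Step 1: energy identity in each phase.} I would test the momentum equation in \eqref{sys-steady1} by $u_s^\pm$ itself and integrate over $\Omega_s^\pm$. The boundary of $\Omega_s^+$ consists of $\p\Omega$ and $\Gamma_s$, and the boundary of $\Omega_s^-$ is just $\Gamma_s$; in all cases $u_s^\pm$ vanishes, so \emph{every} boundary contribution after integration by parts of $\divg\sigma^\pm(u_s^\pm,p_s^\pm)$ drops out, and in particular the jump term $\mu\kappa_s n_s$ never enters the identity. Since $u_s^\pm$ is divergence-free with zero trace on $\p\Omega_s^\pm$, the convective term rewrites as $\tfrac12\int_{\Omega_s^\pm}u_s^\pm\!\cdot\!\nabla|u_s^\pm|^2 = -\tfrac12\int_{\Omega_s^\pm}|u_s^\pm|^2\divg u_s^\pm + \tfrac12\int_{\p\Omega_s^\pm}|u_s^\pm|^2 (u_s^\pm\!\cdot\! n_s)$, and both contributions vanish. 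What survives is $2\nu^\pm\int_{\Omega_s^\pm}|\varepsilon(u_s^\pm)|^2 = 0$.

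\emph{Step 2: from $\varepsilon(u_s^\pm)=0$ to $u_s^\pm\equiv 0$.} Any field with $\varepsilon(u) = 0$ in a planar domain is a rigid motion. For the exterior phase $\Omega_s^+$, the no-slip condition on $\p\Omega$ (a curve of positive measure) annihilates every rigid motion, hence $u_s^+\equiv 0$. For the interior phase $\Omega_s^-$, the vanishing of the rigid motion on the whole closed curve $\Gamma_s$ forces all three infinitesimal rigid-motion parameters to be zero, so $u_s^-\equiv 0$ too. This is where the smallness constant $C_s$ is a convenient safeguard: it prevents one from leaving the regime in which $\Gamma_s$ is a Lipschitz curve for which Korn's inequality and integration by parts are valid.

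\emph{Step 3: constant pressures and circular interface.} Plugging $u_s^\pm = 0$ back into \eqref{sys-steady1} gives $\nabla p_s^\pm = 0$, so each $p_s^\pm$ is constant on its connected component. The jump condition on $\Gamma_s$ collapses to $-(p_s^+ - p_s^-)n_s = \mu\kappa_s n_s$, whence $\kappa_s = -(p_s^+ - p_s^-)/\mu$ is a constant along $\Gamma_s$. A short regularity bootstrap (the constancy of $\kappa_s$ upgrades the regularity of $\Gamma_s$) then yields that $\Gamma_s$ is a simple closed planar curve with constant signed curvature, which must be a circle.

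\emph{Main obstacle.} The energy identity is essentially automatic. The delicate points are (i) ensuring that the geometry $\Omega_s^\pm$ is regular enough to justify the integration-by-parts formula and the rigid-motion characterization of $\ker\varepsilon$ -- this is precisely what the a priori smallness $\|u_s^\pm\|_{\LL^2(\Omega)}\le C_s$ is meant to guarantee via the nonlinear coupling with the jump condition -- and (ii) deducing from the pointwise constancy of $\kappa_s$ on an a priori only Lipschitz curve that $\Gamma_s$ is genuinely a circle, which requires a brief regularity argument on $Z_s\in\HH^{1/2}(\Gamma)$.
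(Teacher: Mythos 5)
Your proof is correct, and in Step 1 it takes a genuinely different (and in fact sharper) route than the paper. The paper integrates by parts only the viscous term, keeps the convective contribution $\rho^\pm\int_{\Omega_s^\pm}(u_s^\pm\otimes u_s^\pm):\nabla u_s^\pm$ on the right-hand side, estimates it via H\"older, the embedding $\HH^{1/2}\hookrightarrow\LL^4$ in dimension 2, interpolation and Poincar\'e, and then absorbs it into $\nu^\pm\|\nabla u_s^\pm\|^2_{\LL^2}$ -- this absorption is exactly where the smallness constant $C_s<\nu^\pm/(\rho^\pm C C_{\mathcal P})$ is used, so in the paper $C_s$ is a fluid-dynamic smallness threshold, not (as you speculate in your closing paragraph) a safeguard for the regularity of $\Gamma_s$. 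You instead observe that the trilinear form is skew: since $u_s^\pm$ is divergence-free and vanishes on \emph{all} of $\partial\Omega_s^\pm$, one has $\int(u_s^\pm\cdot\nabla)u_s^\pm\cdot u_s^\pm=\tfrac12\int_{\partial\Omega_s^\pm}|u_s^\pm|^2\,u_s^\pm\cdot n_s=0$, so the convective term drops out identically and the energy identity reduces to $2\nu^\pm\int|\varepsilon(u_s^\pm)|^2=0$ with no smallness hypothesis at all. This is the classical argument for the steady Navier--Stokes problem with zero forcing and homogeneous Dirichlet data, and it yields a strictly stronger conclusion (uniqueness of the trivial velocity in the whole of $\HH^1$, not just in an $\LL^2$-ball); the paper's estimate-and-absorb approach buys nothing here except robustness to perturbations (e.g.\ nonzero $f^\pm$) that the proposition does not consider. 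Your Step 2 (rigid motions annihilated by the boundary conditions, as in Temam) and Step 3 (constant pressures, Young--Laplace, constant curvature hence a circle) coincide with the paper's, the only cosmetic difference being that the paper works with $\|\nabla u_s^\pm\|_{\LL^2}$ and Poincar\'e rather than with $\varepsilon(u_s^\pm)$ and Korn. The one caveat common to both arguments, which you rightly flag, is that the integration-by-parts identities and the density of smooth divergence-free fields require $\Gamma_s=(\Id+Z_s)(\Gamma)$ to be at least Lipschitz, which a generic $Z_s\in\HH^{1/2}(\Gamma)$ does not guarantee; the paper passes over this silently.
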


\begin{proof}
Recall that $(v \cdot \nabla) v = \divg(v\otimes v)$ when $\divg v = 0$. From~\eqref{sys-steady1}, we deduce by integration by parts
\begin{equation*}
\nu^\pm \| \nabla u_s^\pm \|^2_{[\LL^2(\Omega_s^\pm)]^2}  =  \rho^\pm \int_{\Omega} (u^\pm_s \otimes u^\pm_s): \nabla u^\pm_s \d \Omega_s^\pm.
\end{equation*}
We estimate the right-hand-side with the H\"{o}lder's inequality and the continuous embedding $\HH^{1/2}(\Omega_s^\pm) \hookrightarrow \LL^4(\Omega_s^\pm)$ valid in dimension~2:
\begin{equation*}
\int_{\Omega} (u_s^\pm \otimes u_s^\pm): \nabla u_s^\pm \d \Omega_s^\pm 
 \leq  
\| \nabla u_s^\pm \|_{[\LL^2(\Omega_s^\pm)]^2}\|u_s^\pm \|^2_{\LL^4(\Omega_s^\pm)} 
 \leq  C\| \nabla u_s^\pm \|_{[\LL^2(\Omega_s^\pm)]^2}\|u_s^\pm \|^2_{\HH^{1/2}(\Omega_s^\pm)} .
\end{equation*}
Since we have the interpolation $\HH^{1/2}(\Omega_s^\pm) = \left[ \LL^2(\Omega_s^\pm); \HH^1(\Omega_s^\pm) \right]_{1/2}$, this yields
\begin{equation*}
\nu^\pm  \| \nabla u_s^\pm \|^2_{[\LL^2(\Omega_s^\pm)]^2}  \leq  \rho^\pm C
\| \nabla u_s^\pm \|_{[\LL^2(\Omega_s^\pm)]^2}\|u_s^\pm \|_{\LL^{2}(\Omega_s^\pm)}
 \|u_s^\pm \|_{\HH^{1}(\Omega_s^\pm)}
\leq \rho^\pm C \|u_s^\pm\|_{\LL^2(\Omega_s^\pm)}\|u_s^\pm\|^2_{\HH^1(\Omega_s^\pm)}.
\end{equation*}
Using the Poincar\'e inequality in~$\Omega_s^\pm$, namely $\| u_s^\pm \|_{\HH^1(\Omega_s^\pm)} \leq C_{\mathcal{P}} \|\nabla u_s^\pm\|_{[\LL^2(\Omega_s^\pm)]^2}$, as ${u_s}^\pm_{|\p \Omega_s^\pm} = 0$, we deduce
\begin{equation*}
\nu^\pm \|  u_s^\pm \|^2_{\HH^1(\Omega_s^\pm)} 
\leq \rho^\pm CC_{\mathcal{P}} \|u_s^\pm\|_{\LL^2(\Omega_s^\pm)}\|u_s^\pm\|^2_{\HH^1(\Omega_s^\pm)}
\leq \rho^\pm CC_{\mathcal{P}}C_s \|u_s^\pm\|^2_{\HH^1(\Omega_s^\pm)}.
\end{equation*}
By choosing $C_s < \displaystyle\frac{\nu^\pm}{\rho^\pm C C_{\mathcal{P}}} $, we obtain necessarily~$\|u_s^\pm\|_{\HH^1(\Omega_s^\pm)} = 0$. Hence $u_s^\pm = 0$ almost everywhere in $\Omega_s^\pm$, and so $\nabla p_s^\pm = 0$ in $\Omega_s^\pm$. The pressure is thus constant on both sides of the bubble-soap, and the transmission condition~\eqref{eq-steady3} reduces to the so-called Young-Laplace equation, that is $\kappa_s = [p_s]/\mu$ is constant. So~$\Gamma_s$ is a circle enclosing the same volume as~$\Gamma$ (because of incompressibility), and the displacements~$Z_s$ describe the set~$\mathcal{C}$, completing the proof.
\end{proof}

Therefore system~\eqref{sys-origin} is linearized around $(u_s^+,p_s^+, u_s^-,p_s^-, Z_s)$ such that $u_s^\pm = 0$, $\nabla p_s^\pm = 0$ and $Z_s \in \mathcal{C}$.

\section{On the linearization} \label{sec-lin}

In order to linearize system~\eqref{sys-origin}, the method commonly used consists in defining a change of variable (as an extension of mapping~$X$), introduce a change of unknowns and rewrite the equations~\eqref{eq-main-origin1}-\eqref{eq-main-origin3} in reference domains~$\Omega^\pm$ and~$\Gamma$. Next, introduce the displacement~$Z= X-X_s$, where~$X_s- \Id \in \mathcal{C}$ corresponds to a desired steady state, and linearize the corresponding transformed system by considering~$Z$ small. Referring to~\cite{Court2022}, we then obtain
\begin{equation}
\begin{array} {rcl}
\displaystyle \rho^\pm\frac{\p u^\pm}{\p t}
-\divg \sigma^\pm(u^\pm, p^\pm) = \rho^\pm f^\pm \quad \text{ and } \quad 
\divg u^\pm = 0 & & \text{in } \Omega^\pm \times (0,T),  \\
u^+ = 0 & &\text{on } \p \Omega \times (0,T),  \\
u^+ = u^- = \displaystyle\frac{\p Z}{\p t} 
\quad \text{ and } \quad 
-\left[\sigma(u,p) \right]n = \mu \divg_{\Gamma}( \nabla^{n}_{\Gamma} Z) + g & &
\text{on } \Gamma \times (0,T), \\[5pt]
Z(\cdot,0 ) = Z_0  \text{ on } \Gamma, \quad
u^\pm(\cdot,0) = u_0^\pm \text{ in } \Omega^\pm, & & 
\end{array}
\label{sys-tempo}
\end{equation}
where we have introduced~$\nabla^{n}_{\Gamma} Z := (n\otimes n) \nabla_{\Gamma} Z$.

\subsection{The kernel of~$\nabla^{n}_{\Gamma}$ and the lack of coercivity}
\label{sec-kernel}
The kernel of the operator~$Z\mapsto \nabla^{n}_{\Gamma} Z = (n\otimes n)\nabla_{\Gamma}Z$ which appears in the linearization of $(\kappa n) \circ X$ conditions the unique continuation argument used in the proof of approximate controllability (section~\ref{sec-cont}). Actually there exists smooth mappings~$X = Z+\Id$ that are orientation-preserving, volume-preserving, that transform the circle~$\Gamma$ into another Jordan curve, for which we can choose~$Z$ arbitrarily close to zero -- or any steady state~$Z_s \in \mathcal{C}$ -- and such that~$\divg_{\Gamma}(\nabla^{n}_{\Gamma}Z ) = 0$. For example, considering a smooth parameterization~$X_p: \xi \in [0,2\pi) \rightarrow \Gamma $, the following displacements
\begin{equation*}
Z(X_p(\xi)) =  \sum_{k=2}^{\infty} \frac{1}{k^2-1}
\Big(
\big(
a_{k} \cos(k\xi/r) + b_{k} \sin(k\xi/r)
\big) n +
\big(
-b_{k}k \cos(k\xi/r) + a_{k}k \sin(k\xi/r)
\big)\tau
\Big)
\end{equation*}
(where $\tau$ is the tangent vector of~$\Gamma$) lie in the kernel of~$\nabla^{n}_{\Gamma}$. Several examples of such displacements are represented in Figure~\ref{fig-bubble} below, corresponding to $k \in \{2,3,4,5,6,7\}$ and coefficients $a_k, b_k \in \{0,1\}$.\\
\begin{minipage}{0.96\linewidth}
\centering
\begin{tabular} {c|c|c|c|c|c}
\hspace*{0.0cm}
\begin{minipage}{0.16\linewidth}
\begin{figure}[H]
\includegraphics[trim = 5cm 9cm 5cm 9cm, clip, scale=0.14]{./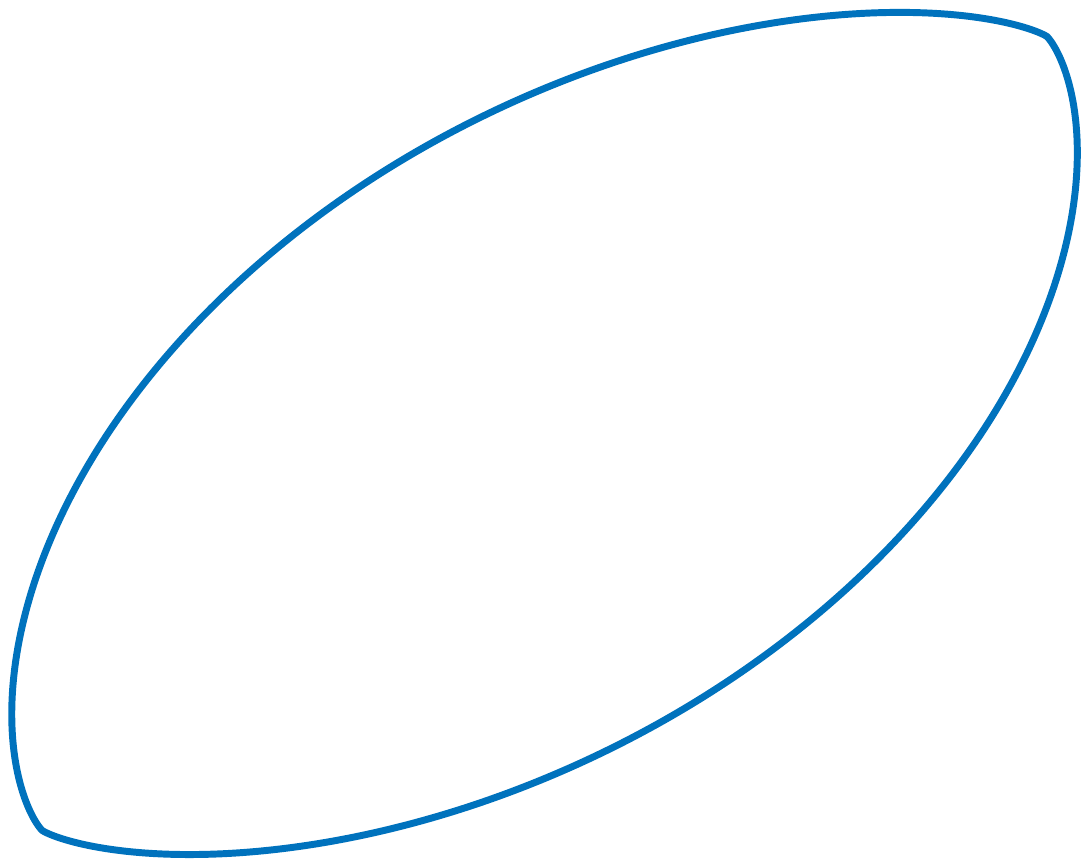}
\end{figure}
\end{minipage}\hspace*{-15pt}
&
\hspace*{5pt}
\begin{minipage}{0.16\linewidth}
\begin{figure}[H]
\includegraphics[trim = 5cm 9cm 5cm 9cm, clip, scale=0.16]{./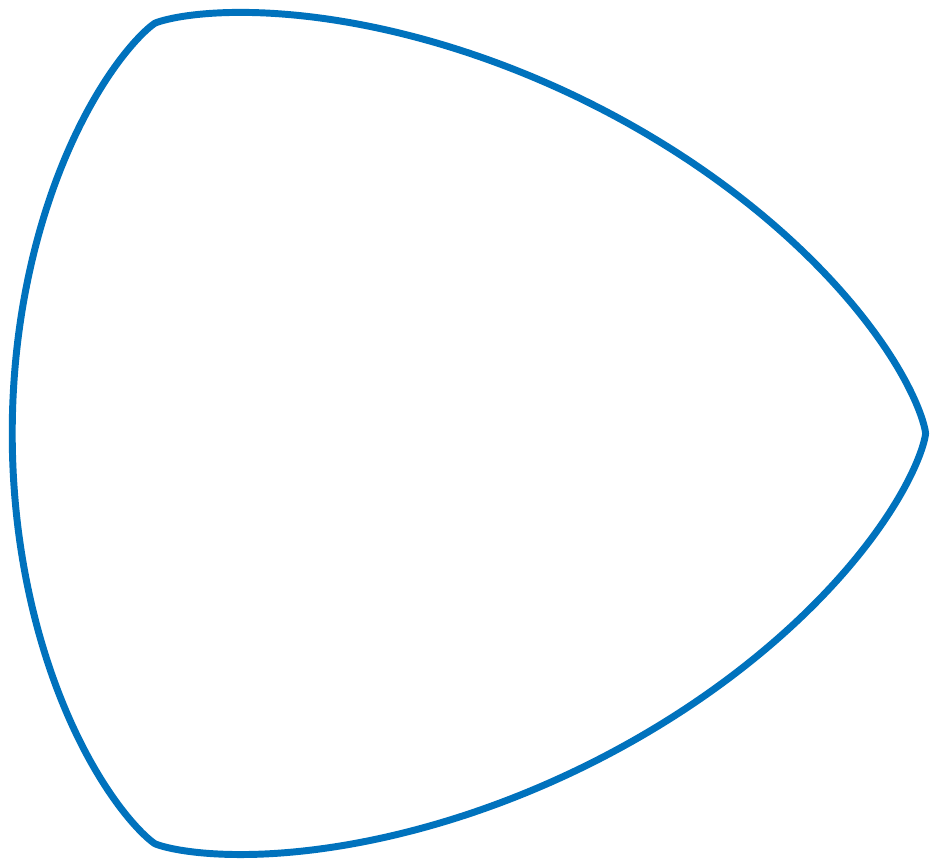}
\end{figure}
\end{minipage}\hspace*{-10pt}
&
\begin{minipage}{0.16\linewidth}
\begin{figure}[H]
\includegraphics[trim = 3cm 9cm 5cm 9cm, clip, scale=0.16]{./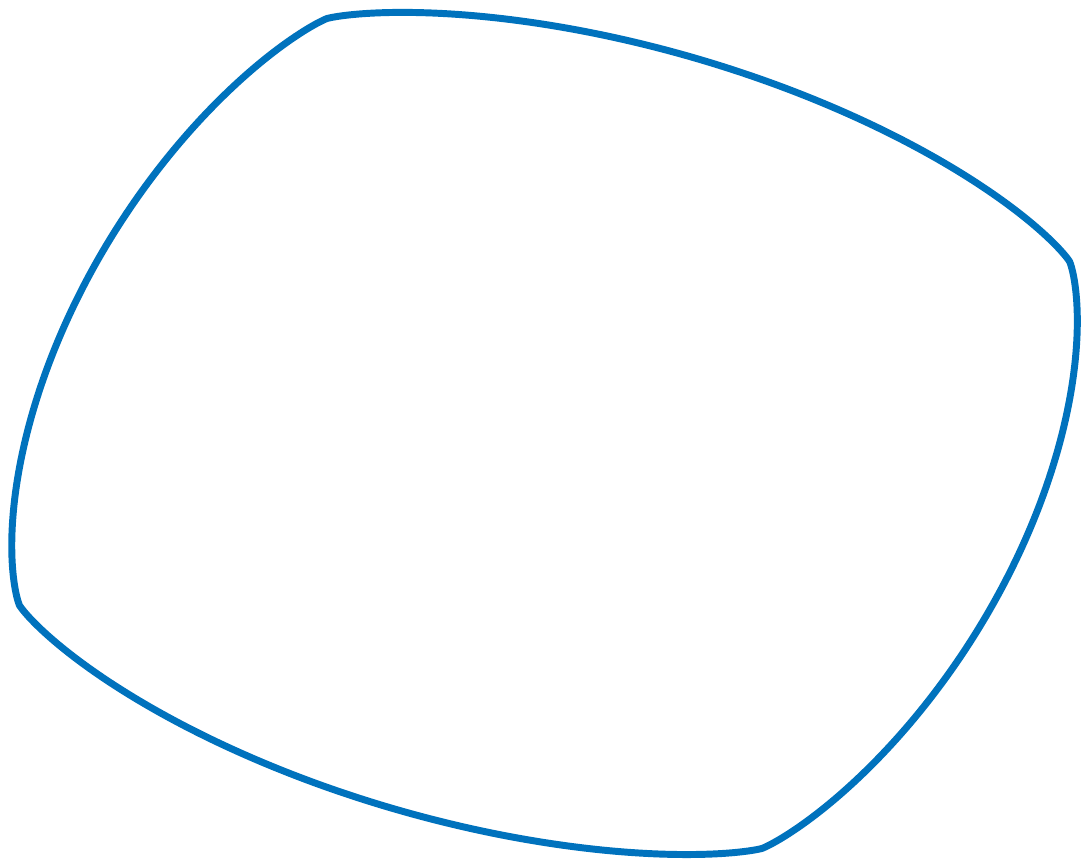}
\end{figure}
\end{minipage}
&
\hspace*{5pt}
\begin{minipage}{0.16\linewidth}
\begin{figure}[H]
\includegraphics[trim = 5cm 9cm 4cm 9cm, clip, scale=0.14]{./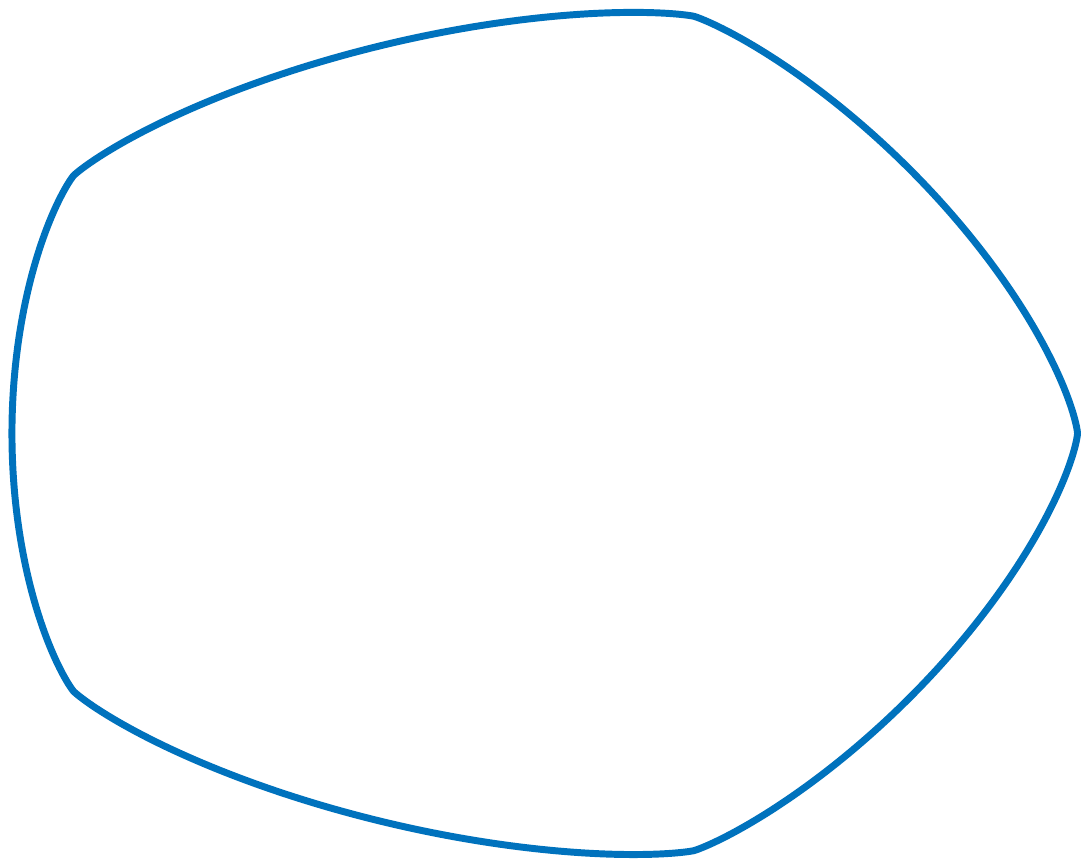}
\end{figure}
\end{minipage}\hspace*{-15pt}
&
\hspace*{5pt}
\begin{minipage}{0.16\linewidth}
\begin{figure}[H]
\includegraphics[trim = 5cm 9cm 4.5cm 9cm, clip, scale=0.14]{./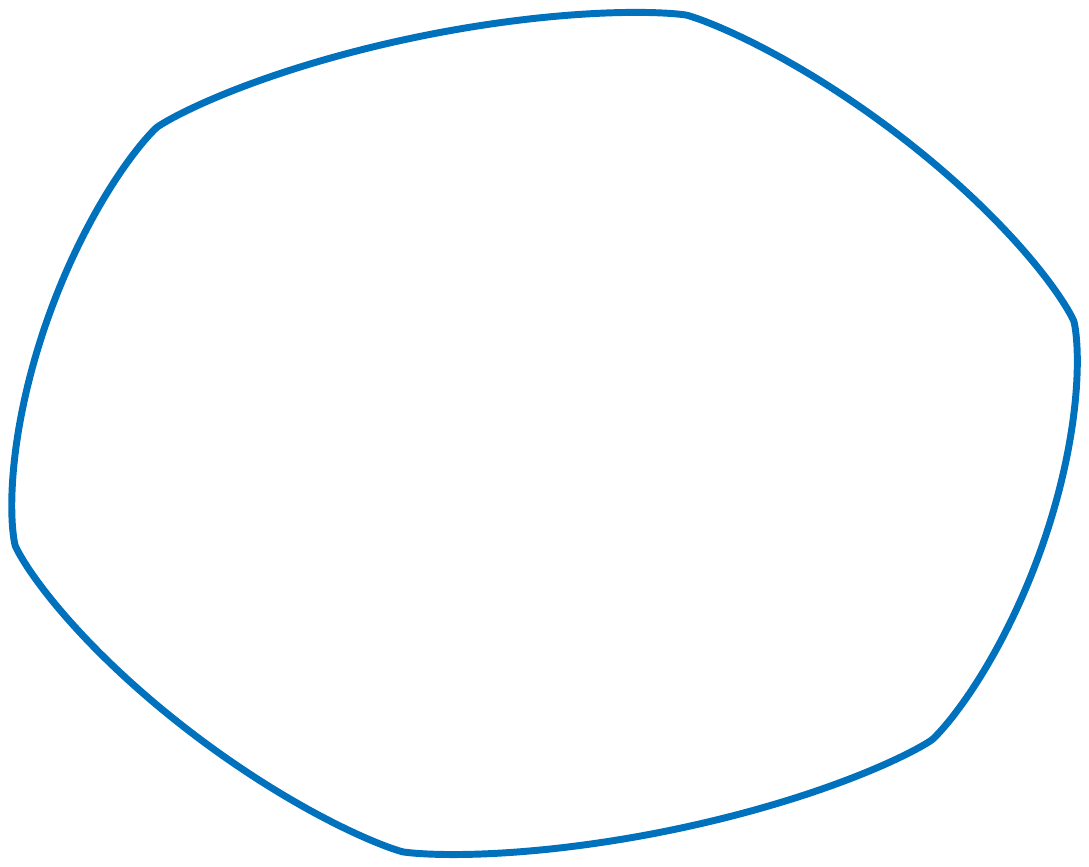}
\end{figure}
\end{minipage}\hspace*{-15pt}
&
\hspace*{5pt}
\begin{minipage}{0.16\linewidth}
\begin{figure}[H]
\includegraphics[trim = 5cm 9cm 5cm 9cm, clip, scale=0.14]{./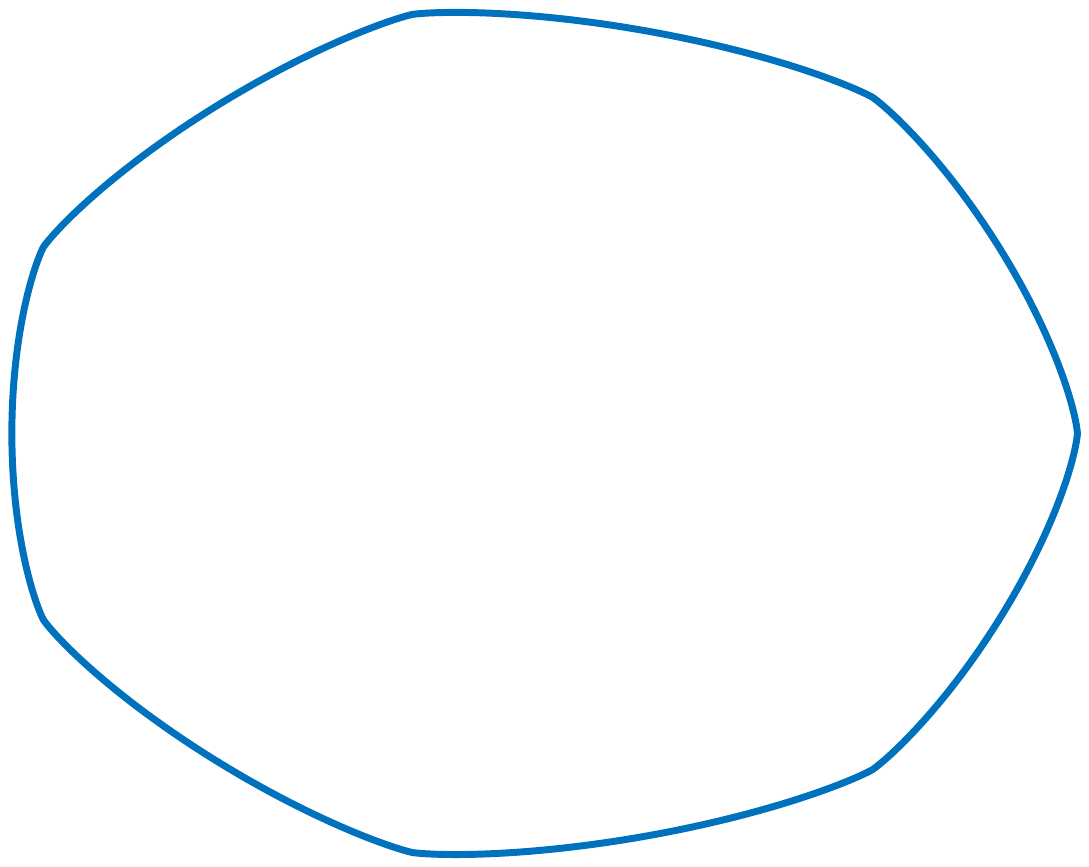}
\end{figure}
\end{minipage}
\end{tabular}
\vspace*{-10pt}
\begin{figure}[H]
\caption{Different deformations $X(\Gamma)$ of the circle such that~$\nabla^{n}_{\Gamma} (X-\Id) =0$.\label{fig-bubble}}
\end{figure}
\end{minipage}\vspace*{5pt}\\ 
Therefore, in the proof of the approximate controllability result in section~\ref{sec-cont}, the orthogonal of the reachable set for system~\eqref{sys-tempo} would not be reduced to trivial displacements, and thus the result would fail.

\subsection{Considering the Laplace-Beltrami operator}
In order to define a linear operator whose the kernel is reduced to trivial displacements, we need to regularize the operator~$\divg_{\Gamma}\nabla^{n}_{\Gamma}$. One possibility consists in considering a prior control~$g_0$, in a form of a feedback operator, as it is done in~\cite{Court2022}, as $g_0= \divg((\tau\otimes \tau)\nabla_{\Gamma} Z)$, dealing with the tangential gradient of~$Z$ only, so that the resulting linear operator corresponds to $\divg_{\Gamma} (\nabla^{n}_{\Gamma} Z) + g_0 = \Delta_{\Gamma} Z$. Note that in~\cite{Denisova1989, Denisova1990, Denisova1993} which deal with linear systems related to system~\eqref{sys-origin}, it is actually the Laplace-Beltrami operator which is considered as operator representing linear version of the surface tension operator. Therefore, from now, we propose to rather deal with system~\eqref{sysmain}, involving the Laplace-Beltrami operator. Alternatively, one could also keep the degenerate operator~$\nabla_{\Gamma}^{n}$, consider a reachable set that excludes the elements of its kernel, and thus approximate controllability would be obtained for system~\eqref{sysmain} up to elements of the kernel of~$\nabla_{\Gamma}^{n}$.\\
For system~\eqref{sysmain}, we will assume in the rest of the paper that for all 
\begin{equation*}
f^\pm \in \mathrm{L}^2(0,T;\LL^2(\Omega^\pm)), \quad 
g \in \mathrm{L}^2(0,T; \HH^{1/2}(\Gamma)) \cap \mathrm{H}^{1/4}(0,T;\LL^2(\Gamma)),
\end{equation*}
there exists a unique solution~$(u^+,u^-,Z)$, satisfying
\begin{equation*}
\displaystyle u^\pm \in \mathrm{L}^2(0,T;\HH^2(\Omega^\pm)) \cap \mathrm{H}^1(0,T;\LL^2(\Omega^\pm)), \quad
\displaystyle \frac{\p Z}{\p t} \in \mathrm{L}^2(0,T;\HH^{3/2}(\Gamma)) \cap \mathrm{H}^1(0,T;\HH^{-1/2}(\Gamma)).
\end{equation*}
This parabolic regularity result is given in~\cite{Simonett2011}. 

\section{Approximate controllability} \label{sec-cont}

Assume now~$f^\pm = 0$, $u_0^\pm = 0$ and~$Z_0=0$. Let us define precisely what we call {\it approximate controllability} in our context.
\begin{definition} \label{def-cont}
Define the reachable set for system~\eqref{sysmain} at time~$T>0$:
\begin{equation*}
\mathcal{R}(T) = \left\{
(u^\pm(T), Z(T)) \in \HH^1(\Omega^\pm) \times \HH^{1/2}(\Gamma)/\R^2 \mid (u^+,u^-,Z) \text{ satisfies~\eqref{sysmain}, } g\in \L^2(0,T;\HH^{1/2}(\Gamma))
\right\}.
\end{equation*}
We say that system~\eqref{sysmain} is approximately controllable at time~$T$ if~$\mathcal{R}(T)$ is dense in~$\HH^1(\Omega^+) \times \HH^1(\Omega^-) \times \HH^{1/2}(\Gamma)/\R^2$.
\end{definition}

Note that the displacements are considered up to translations of~$\R^2$. We prove that system~\eqref{sysmain} is approximately controllable in the sense of Definition~\ref{def-cont}, provided that the radius of the circle of reference does not meet scaled zeros of a Bessel function.

\begin{theorem} \label{th-dim2}
Assume that the radius $r_s>0$ of $\Gamma$ satisfies
\begin{equation}
r_s \notin \left\{ \sqrt{\frac{\nu^-}{\lambda}}r_0 ; \ \lambda \in \mathrm{Sp}(A^-), \ r_0 \in J_1^{-1}(\{0\})\right\},
\label{def-excluded}
\end{equation}
where~$A^-$ denotes the Stokes operator in~$\Omega^-$. Then system~\eqref{sysmain} is approximately controllable in the sense of Definition~\ref{def-cont}, for all $T>0$.
\end{theorem}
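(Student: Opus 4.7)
The approach is the classical duality / unique continuation scheme: recast approximate controllability of~\eqref{sysmain} as a unique continuation property for the backward adjoint system, and then analyse the resulting overdetermined spectral problem on the inner disk $\Omega^-$ by separation of variables, which is where Bessel functions of the first kind enter naturally. First, by Hahn--Banach, density of $\mathcal{R}(T)$ in $\HH^1(\Omega^+) \times \HH^1(\Omega^-) \times \HH^{1/2}(\Gamma)/\R^2$ is equivalent to the following statement: for any terminal data $(\varphi_T^\pm, \eta_T)$ in the corresponding dual, the backward adjoint state $(\varphi^\pm, q^\pm, \eta)$ obtained by formal integration by parts in~\eqref{sysmain} -- solving a system of the same type as~\eqref{sysmain} read backward in time with homogeneous source and terminal data $(\varphi_T^\pm, \eta_T)$ -- must be trivial as soon as the duality identity
$$\int_0^T \langle g(t), \varphi_{|\Gamma}(t)\rangle_{\HH^{-1/2}(\Gamma), \HH^{1/2}(\Gamma)}\, dt = 0$$
holds for every $g \in \L^2(0,T;\HH^{1/2}(\Gamma))$. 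Since $g$ ranges over a dense subspace, this reduces to the unique continuation condition $\varphi^+ = \varphi^- = 0$ on $\Gamma \times (0,T)$.

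\textbf{Reduction to an overdetermined eigenvalue problem.} Next, the parabolic regularity recalled after~\eqref{sysmain} gives analyticity of $(\varphi^\pm, q^\pm, \eta)$ in time on $(0,T)$, and the generator of the adjoint semigroup has compact resolvent. Expanding the adjoint state along its eigenmodes reduces the unique continuation to an eigenmode-by-eigenmode statement: for each eigentriplet with eigenvalue $\lambda$, the condition $\varphi^\pm = 0$ on $\Gamma$ must force the triplet to vanish. The adjoint transmission condition $\varphi^\pm = \p_t \eta = \lambda \eta$ on $\Gamma$ then gives $\eta \equiv 0$ on $\Gamma$ whenever $\lambda \neq 0$ (the case $\lambda = 0$ is absorbed in the translation kernel factored out in Definition~\ref{def-cont}), and the remaining jump condition $-[\sigma(\varphi, q)] n = \mu \Delta_\Gamma \eta$ collapses to $[\sigma(\varphi, q)] n = 0$ on $\Gamma$. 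In the outer annular domain, $\varphi^+$ satisfies a Stokes eigenvalue problem with Dirichlet data on $\p\Omega \cup \Gamma$; combined with the vanishing of $\sigma^-(\varphi^-, q^-) n$ on $\Gamma$ obtained below, one gets zero normal Cauchy stress on $\Gamma$ as well, and Fabre--Lebeau-type unique continuation for the Stokes system then yields $\varphi^+ \equiv 0$. What remains is the central overdetermined eigenvalue problem on the inner disk, namely to show that $\varphi^- = 0$ and $\sigma^-(\varphi^-, q^-) n = 0$ on $\Gamma$ already force $\varphi^- \equiv 0$ in $\Omega^-$.

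\textbf{Bessel analysis and main obstacle.} The concluding step is the explicit analysis of this overdetermined problem on the disk of radius $r_s$. Introducing the stream function $\varphi^- = \nabla^\perp \psi$ and taking the curl of the Stokes eigenvalue equation reduces the problem to the biharmonic-type equation $(\Delta + \lambda/\nu^-)\Delta \psi = 0$ in the disk. Separation of variables in polar coordinates decomposes $\psi$ into angular Fourier modes $\psi_n(r) e^{in\theta}$, and regularity at the origin selects the first-kind Bessel factor $J_n(\sqrt{\lambda/\nu^-}\, r)$ together with a polynomial component $r^{|n|}$. Imposing the homogeneous Dirichlet condition and the vanishing normal Cauchy stress on $\Gamma$ then produces, for each Fourier mode, a linear system on the two radial amplitudes whose compatibility condition -- after using the Bessel recurrences $rJ_n'(r) - nJ_n(r) = -rJ_{n+1}(r)$ -- reduces on the axisymmetric modes $n = 0$ to the vanishing of $J_1(\sqrt{\lambda/\nu^-}\, r_s)$. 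Under assumption~\eqref{def-excluded}, this factor is nonzero for every $\lambda \in \mathrm{Sp}(A^-)$, so the only compatible mode is the trivial one; hence $\varphi^- \equiv 0$, $\varphi^+ \equiv 0$, and the terminal data of the adjoint must vanish. The main obstacle I anticipate is precisely this last explicit computation: carefully deriving the boundary system mode-by-mode, dealing with the non-axisymmetric Fourier modes (where other Bessel-function factors may appear and must likewise be ruled out or shown to be subsumed), and verifying that the sharp incompatibility condition is exactly the one appearing in~\eqref{def-excluded}.
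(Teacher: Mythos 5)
Your overall architecture matches the paper's: duality reduces approximate controllability to a unique continuation property for the backward adjoint system, the identity $\int_0^T\langle g,\p_t\zeta\rangle\,\d t=0$ for all $g$ kills the interface velocity of the adjoint, an eigenfunction decomposition reduces matters to an overdetermined Stokes eigenvalue problem, the disk is treated by separation of variables leading to $J_1$, and the annulus is handled by Fabre-type unique continuation. However, there is a genuine gap exactly where you flag your ``main obstacle,'' and it is not a routine verification. On the inner disk, the Dirichlet condition $v^-=0$ on $\Gamma$ alone does not overdetermine the problem: for each angular mode $n\ge 1$ your stream-function ansatz $\psi_n(r)=a\,r^{n}+b\,J_n(kr)$ with $\psi_n(r_s)=\psi_n'(r_s)=0$ has nontrivial solutions precisely when $J_{n+1}(kr_s)=0$, i.e.\ these are honest Dirichlet Stokes eigenfunctions of the disk whose eigenvalues lie in $\mathrm{Sp}(A^-)$ and are \emph{not} excluded by hypothesis~\eqref{def-excluded}, which only involves zeros of $J_1$. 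So the mode-by-mode compatibility conditions you would derive do not ``reduce to the vanishing of $J_1$'' except for $n=0$, and your argument as written cannot conclude $v^-\equiv 0$. The paper closes this by asserting that, by the rotational invariance of the problem, the relevant eigenfunctions have no angular dependence; it then works directly with the azimuthal component, whose regular solution of the resulting Bessel ODE is $\alpha J_1(\sqrt{\lambda^-/\nu^-}\,r)$, and only then does~\eqref{def-excluded} force $\alpha=0$. Whatever one thinks of that symmetry reduction, it is the step that makes the $J_1$-only hypothesis sufficient, and your proposal neither reproduces it nor supplies a substitute (e.g.\ using the jump condition $[\sigma]n=0$ jointly with the outer domain to eliminate the $n\ge1$ modes).

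Two further points. First, the paper does not expand the coupled adjoint generator: it first differentiates the adjoint system in time, and since $\p_t\zeta=0$ the time-differentiated velocities satisfy decoupled homogeneous-Dirichlet Stokes parabolic problems in $\Omega^\pm$, which legitimately decompose along eigenfunctions of $A^\pm$; your expansion ``along eigenmodes of the adjoint semigroup'' conflates the spectrum of the coupled velocity--displacement operator with $\mathrm{Sp}(A^-)$, which is what actually appears in~\eqref{def-excluded}. Second, you stop too early at the end: after $\dot\phi^\pm\equiv 0$ one only knows $\phi^\pm$ is constant in time, and the paper still needs the stationary adjoint system, the energy identity~\eqref{last-energy}, the characterization of $\varepsilon(\phi^\pm_T)=0$ via rigid displacements, and $\nabla_\Gamma Z_T=0$ to conclude that $Z_T$ is a translation, hence zero in $\HH^{1/2}(\Gamma)/\R^2$. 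Also, once $v^-\equiv0$ the pressure $q^-$ remains an undetermined \emph{constant}, so the outer boundary condition is $\sigma^+(v^+,q^+)n=q^-n$ rather than zero normal stress; Fabre's argument still applies, but your phrasing should be corrected.
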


\begin{proof}
The method is inspired by~\cite{Court_EECT1}. Consider $(\phi^+_T, \phi^-_T, Z_T) \in \mathcal{R}(T)^\perp$, and introduce the following adjoint system
\begin{equation}
\begin{array}{rcl}
\displaystyle -\rho^\pm\frac{\p \phi^\pm}{\p t} - \divg \sigma^\pm(\phi^\pm, \psi^\pm) = 0 
\quad \text{and} \quad \divg \phi^\pm = 0 & & 
\text{in } \Omega^\pm \times (0,T)\\[5pt]
\phi^+ = 0 & & \text{on } \p \Omega \times (0,T)\\
\phi^\pm =  \displaystyle -\frac{\p \zeta}{\p t} 
\quad\text{and} \quad
-\left[ \sigma (\phi, \psi)\right]n =  \mu \Delta_{\Gamma} \zeta
& & \text{on } \Gamma \times (0,T), \\[5pt]
\phi^\pm(T) = \phi^\pm_T \text{ in }\Omega^\pm, 
\quad  \zeta(T) = Z_T \text{ on }\Gamma,
\end{array} \label{sysadj}
\end{equation}
with $(\phi^+,\psi^+, \phi^-, \psi^-, \zeta)$ as unknowns. Now consider $(u^+, p^+, u^-,p^-, Z)$ solution of~\eqref{sysmain} with $g$ as data. Taking the inner product in $\L^2(0,T;\LL^2(\Omega^\pm))$ of the first equation of~\eqref{sysmain} by $u^\pm$, by integration by parts we obtain for all $g \in \L^2(0,T;\HH^{1/2}(\Gamma))$ the following identity
\begin{equation*}
\rho^+\int_{\Omega^+} \phi^+_T \cdot u^+(T) \, \d \Omega^+ 
+ \rho^-\int_{\Omega^-} \phi^-_T \cdot u^-(T) \, \d \Omega^- 
- \mu\left\langle  \nabla Z_T; \nabla Z(T)\right\rangle_{\LL^2(\Gamma)} = 
-\int_0^T \left\langle g; \frac{\p \zeta}{\p t}\right\rangle_{\LL^2(\Gamma)} \d t.
\end{equation*}
Since $(\phi^+_T, \phi^-_T, Z_T) \in \mathcal{R}(T)^\perp$, this identity yields $\displaystyle\frac{\p \zeta}{\p t} = 0$ on $\Gamma$. Then deriving system~\eqref{sysadj} in time yields
\begin{equation*}
\begin{array}{rcl}
\displaystyle -\rho^\pm\frac{\p \dot{\phi}^\pm}{\p t} - \divg \sigma^\pm(\dot{\phi}^\pm, \dot{\psi}^\pm) = 0 
\quad \text{and} \quad
\divg \dot{\phi}^\pm = 0 & &  \text{in } \Omega^\pm \times (0,T),\\[5pt]
\dot{\phi}^+ = 0 & & \text{on } \p \Omega \times (0,T), \\[5pt]
\dot{\phi}^\pm =  0 
\quad \text{and} \quad
\left[ \sigma (\dot{\phi}, \dot{\psi})\right] n =  0 & & \text{on } \Gamma \times (0,T), \\[5pt]
\phi^\pm(T) = \phi^\pm_T \text{ in }\Omega^\pm, \quad  \zeta(T) = Z_T \text{ on }\Gamma.
\end{array}
\end{equation*}
Let us show that $\dot{\phi}^\pm$ are equal to $0$. The functions $\dot{\phi}^\pm$ can be decomposed in terms of eigenfunctions of the respective Stokes operators~$A^\pm$ in~$\Omega^\pm$, so that the unique continuation argument reduces to show that if
\begin{equation}
\begin{array}{rcl}
- \divg \sigma^\pm(v^\pm, q^\pm) = \lambda^\pm v^\pm 
\quad \text{and} \quad
\divg v^\pm = 0 & &  \text{in } \Omega^\pm\\[5pt]
v^+ = 0 & & \text{on } \p \Omega \\[5pt]
v^\pm =  0 
\quad \text{and} \quad
\left[ \sigma (v, q)\right]n =  0 & & \text{on } \Gamma, \\
\end{array} \label{sys-rad}
\end{equation}
for a given $\lambda^\pm \in \mathrm{Sp}(A^\pm)$, then $v^\pm \equiv 0$. Let us first show that $v^- \equiv 0$ in $\Omega^-$. Since $\Omega^-$ is a disk that we can assume to be centered at~$0$, up to a translation, and we can represent the functions $v^- =(v_r,v_\theta)$ and $q^-$ in polar coordinates, using the polar representation of the Laplace/gradient/divergence operators: 
\begin{equation*}
\begin{array} {rcl}
\divg \sigma (v,q) & = & \displaystyle  \nu\left( \begin{matrix}
\displaystyle
\frac{\p^2 v_r}{\p r^2} + \frac{1}{r^2} \frac{\p^2 v_r}{\p \theta^2} + 
\frac{1}{r} \frac{\p v_r}{\p r} - \frac{2}{r^2}\frac{\p v_{\theta}}{\p \theta} -\frac{v_r}{r^2}
 \\[10pt]
\displaystyle \displaystyle 
\frac{\p^2 v_{\theta}}{\p r^2} + \frac{1}{r^2}\frac{\p^2 v_{\theta}}{\p \theta^2} +
\frac{1}{r}\frac{\p v_{\theta}}{\p r} + \frac{2}{r^2}\frac{\p v_r}{\p \theta} 
- \frac{v_{\theta}}{r^2}
\end{matrix} \right)
- \left(\begin{matrix}
\displaystyle \frac{\p q}{\p r}\\[10pt]
\displaystyle\frac{1}{r}\frac{\p q}{\p \theta}
\end{matrix}\right),
\\[25pt]
\divg v & = & \displaystyle
\frac{1}{r} \frac{\p (r v_r)}{\p r} + \frac{1}{r}\frac{\p v_{\theta}}{\p \theta}. 
\end{array}
\end{equation*} 
Furthermore, in view of the radial symmetry of system~\eqref{sys-rad}, these functions have no angular dependence, and thus satisfy
\begin{subequations}
\begin{eqnarray}
\displaystyle-\nu^- \left(\frac{\p^2 v_r}{\p r^2} + \frac{1}{r} \frac{\p v_r}{\p r} -\frac{v_r}{r^2}\right) + \frac{\p q^-}{\p r} = \lambda^- v_r 
\quad \text{and} \quad 
\frac{1}{r} \frac{\p (r v_r)}{\p r} = 0 & & \text{in } \Omega^- 
\label{sys-rad21},\\
\displaystyle-\nu^- \left(\frac{\p^2 v_\theta}{\p r^2} + \frac{1}{r} \frac{\p v_\theta}{\p r} - \frac{v_\theta}{r^2} \right) = \lambda^- v_\theta 
& & \text{in } \Omega^- 
\label{sys-rad22},\\
(v_r,v_\theta) = 0 & & \text{on } \Gamma. 
\label{sys-rad23}
\label{sys-rad2}
\end{eqnarray}
\end{subequations}
We deduce from the second equation of~\eqref{sys-rad21}, and~\eqref{sys-rad23}, that $v_r \equiv 0$, and therefore the first equation of~\eqref{sys-rad2} shows that $q^-$ is constant. Equation~\eqref{sys-rad22} shows that $v_\theta$ satisfies the following Bessel type equation:
\begin{equation*}
r^2\frac{\p^2 v_\theta}{\p r^2} + r \frac{\p v_\theta}{\p r} + \left( \frac{\lambda^-}{\nu^-}r^2-1 \right) v_\theta =0.
\end{equation*}
Via the change of variable $\tilde{r} = \displaystyle \sqrt{\frac{\lambda^-}{\nu^-}}r$, we deduce that there exists $\alpha > 0$ such that
\begin{equation*}
v_\theta(r,\theta)  =  \alpha J_1\left(\sqrt{\frac{\lambda^-}{\nu^-}}r \right),
\end{equation*}
where $J_1$ is the Bessel function of the first kind given by $\displaystyle
J_1(x)  =  \sum_{m=0}^\infty \frac{(-1)^m}{m! (m+1)!}\left(\frac{x}{2}\right)^{2m+1}$.
From the boundary condition given in~\eqref{sys-rad2}, we have $v_\theta(r_s,\theta) =0$, and consequently, if $r_s$ does not lie in the set introduced in~\eqref{def-excluded}, then necessarily $\alpha = 0$ and thus $v_\theta \equiv 0$, and so $v^- \equiv 0$. From there system~\eqref{sys-rad} reduces to
\begin{equation*}
\begin{array}{rcl}
-\divg \sigma^+(v^+ , q^+) = \lambda^+ v^+ 
\quad \text{and} \quad
\divg v^+ = 0 & & \text{in } \Omega^+, \\[5pt]
v^+ = 0 & & \text{on } \p \Omega^+, \\[5pt]
\sigma^+(v^+,q^+)n = q^-n & & \text{on } \Gamma,
\end{array}
\end{equation*}
where $q^-$ is constant. The continuation argument of~\cite{Fabre} applies in order to deduce that $v^+ \equiv 0$ too. Thus $\displaystyle \dot{\phi}^\pm=\frac{\p \phi^\pm}{\p t} \equiv 0$, that is~$\phi^\pm$ are constant in time, and then system~\eqref{sysadj} reduces to 
\begin{equation*}
\begin{array}{rcl}
-\divg \sigma^\pm(\phi_T^\pm, \psi_T^\pm) = 0 
\quad \text{and} \quad
\divg \phi_T^\pm = 0 & & \text{in } \Omega^\pm,\\[5pt]
\phi_T^\pm = 0
\quad \text{and} \quad
-\left[\sigma(\phi_T,\psi_T)\right]n = \mu \Delta_{\Gamma} Z_T & & \text{on } \Gamma.
\end{array}
\end{equation*}
An energy estimate shows that
\begin{equation}
\nu^+ \int_{\Omega^+} |\varepsilon(\phi_T^+) |^2\d \Omega^+
+ \nu^- \int_{\Omega^+} |\varepsilon(\phi_T^-) |^2\d \Omega^-
 + \mu \int_{\Gamma} |\nabla_{\Gamma} Z_T|^2\d \Gamma = 0,
\label{last-energy}
\end{equation}
and yields $\varepsilon(\Phi_T^\pm) \equiv 0$. From~\cite[page~18]{Temam}, there exists $h^\pm \in \R^2$ and $\omega^\pm\in \R$ such that the functions $\phi^\pm_T$ are reduced to $
\phi^\pm_T(y) = h^\pm + \omega^\pm y^{\perp}$, for $y \in \Omega^\pm$,
and since $\phi_T^\pm = 0$ on $\Gamma$, we have $\phi_T^\pm \equiv 0$. Finally, from~\eqref{last-energy} we deduce that $\nabla_{\Gamma} Z_T = 0$, meaning that $Z_T \in \R^2$ is a translation, in particular $Z_T=0$ in $\HH^{1/2}(\Gamma)/\R^2$, which completes the proof.
\end{proof}

\printbibliography

\end{document}